\renewcommand*{\eqref}[1]{%
  \hyperref[{#1}]{\textup{\tagform@{\ref*{#1}}}}%
}
\newcommand{\mcj}{\cj}
\DeclareMathOperator{\soc}{\mathbb{L}}
\newtheorem{theorem}{Theorem}[section]
\newtheorem{lemma}[theorem]{Lemma}
\theoremstyle{definition}
\theoremstyle{remark}
\newcommand{\ca}{\mathcal{A}}
\newcommand{\cc}{\mathcal{K}}
\newcommand{\cj}{\mathcal{J}}
\newcommand{\cq}{S_{\mathcal{J}}}
\newcommand{\sfT}{{\sf T}}
\newcommand{\R}{\mathbb{R}}
\newcommand{\ba}{{a}}
\newcommand{\bb}{{b}}
\newcommand{\bc}{{c}}
\newcommand{\be}{{e}}
\newcommand{\bw}{{w}}
\newcommand{\bx}{{x}}
\newcommand{\by}{{y}}
\newcommand{\bz}{{z}}
\newcommand{\la}{\langle}
\newcommand{\ra}{\rangle}
\newcommand{\lj}{\langle}
\newcommand{\rj}{\rangle}
\newcommand{\tb}{b_k}
\newcommand{\lam}{\lambda}
\newcommand{\bi}{\begin{itemize}}
\newcommand{\ei}{\end{itemize}}
\newcommand{\beq}{\begin{equation}}
\newcommand{\eeq}{\end{equation}}
\newcommand{\itr}{{\rm int}}
\newcommand{\ben}{\begin{enumerate}}
\newcommand{\een}{\end{enumerate}}
\newcommand{\mm}{m}
\newcommand{\nn}{n}
\title{Multiplicative updates  for  symmetric-cone factorizations} 
\author{
	Yong Sheng Soh\thanks{Y.~S.~Soh (email: \url{matsys@nus.edu.sg}) is with the Department of Mathematics, National University of Singapore and the Institute of High Performance Computing, Agency for Science, Technology and Research.}
	\and Antonios Varvitsiotis\thanks{A.~Varvitsiotis (email: \url{avarvits@gmail.com}) is with the Engineering Systems and Design Pillar, Singapore University of Technology and Design.}}
\begin{document}

\maketitle
\begin{abstract}
Given a matrix $X\in \R^{m\times n}_+$ with non-negative entries, the cone factorization problem  over a  cone $\cc\subseteq \R^k$ concerns computing  $\{ \ba_1,\ldots, \ba_{m} \} \subseteq \cc$   and $\{ \bb_1,\ldots, \bb_{n} \} \subseteq~\cc^*$  belonging to its dual so that $X_{ij} = \langle \ba_i, \bb_j \rangle$ for all $i\in [m], j\in [n]$.  Cone factorizations are fundamental to mathematical optimization as they allow us to express convex bodies as feasible regions of linear conic programs.  In this paper, we introduce and analyze the symmetric-cone  multiplicative update (SCMU) algorithm for computing cone factorizations when $\cc$ is symmetric; i.e., it is self-dual and homogeneous.  Symmetric cones are of central interest in mathematical optimization as they provide a common language for studying linear optimization over the nonnegative orthant (linear programs), over the second-order cone (second order cone programs), and over the cone of positive semidefinite matrices (semidefinite programs).  The SCMU algorithm is multiplicative in the sense that the iterates are updated by applying a meticulously 
 chosen automorphism of the cone computed using a generalization of the   geometric mean to symmetric cones.  Using an extension of Lieb's concavity theorem  and von Neumann's trace inequality to symmetric cones, we show that the squared loss objective is non-decreasing along the trajectories of the SCMU algorithm.
Specialized to the nonnegative orthant, the SCMU algorithm  corresponds to the seminal algorithm by Lee and Seung for computing Nonnegative Matrix Factorizations. 

\end{abstract}


\section{Introduction}


A fundamental problem in mathematical optimization is to maximize a linear function over a convex constraint set.  An important consideration for the development of tractable numerical algorithms for such problems is that the  constraint set admits a compact description.  A powerful paradigm for obtaining compact descriptions of convex  sets is based on \emph{lifts}  over structured convex cones.  More precisely, given a convex set $P$, an {\em extended formulation} or {lift} of $P$ over a (full-dimensional closed) convex cone $\cc$ is a description of $P$ as the projection of an affine slice of the cone $\cc$; i.e., $ P = \pi ( \cc \cap \mathcal{A})$, where $\pi$ is a linear projection and $\mathcal{A}$ an affine subspace.  

Given  a $\cc$-lift, the problem of maximizing linear functions over $P$ reduces to one of solving a {\em linear conic program} over the cone  $\cc$.  Linear conic programs (LCPs) capture many important classes of optimization problems -- for instance, LCPs where the cone $\cc=\R^k_+$ is the $k$-dimensional nonnegative orthant correspond to Linear Programs (LPs), LCPs where $\cc=\mathbb{S}^k_+$ is the cone of $k\times k$ positive semidefinite matrices correspond to  Semidefinite Programs (SDPs), and linear conic programs where $\cc=\soc_k=\{(x,t)\in \mathbb{R}^{k}\times \R: \|x\|_2\le t\} $ is the $k$-dimensional second order cone correspond to  Second Order Cone programs (SOCPs).  We often refer to extended formulations over $\R^k_+, \mathbb{S}^k_+,$ $\soc_k$ as LP, SDP, and SOCP-lifts respectively.

Lifted descriptions are powerful tool for optimization because there are many examples of convex  sets whose descriptions (at least on the surface) appear to be complex (such as by being specified by a number of inequalities that is exponential in the ambient dimensions) but do in fact admit compact lifted descriptions.  Examples of such constraint sets include the  unit ball of the $\ell_1$ norm, the spanning tree polytopes~\cite{martin,wong}, and the permutahedra~\cite{goemans}.  

The choice of cone $\cc$ also matters -- the smallest known LP-lift for the stable set polytope of a perfect graph with $n$ vertices  is $n^{O(\log n)}$ \cite{yannakakis} whereas there exists an SDP-lift of size $n+1$ using the theta body \cite{SLG}, and in fact it is the latter representation that gives rise to the {\em only known} polynomial-time algorithm for finding the largest stable set in a perfect graph. 
Moreover, the dimension of the cone $\cc$  
is fundamentally linked to the computational complexity required to solve the associated linear conic program.  As such, for an extended formulation  to be  practically useful for optimization, 
it is important to seek extended formulations  involving cones of low dimensionality. 

Despite the usefulness of extended formulations, it is not immediately clear how one systematically searches for such descriptions.  To this end, Yannakakis shows that LP-lifts of polytopes are fundamentally linked to the existence of a structured matrix factorization of a slack matrix~\cite{yannakakis}, while Gouveia, Parrilo, and Thomas \cite{lifts} extend this connection to lifts using more general cones.  

We explain this connection more precisely: Given an entrywise non-negative matrix $X \in \mathbb{R}_+^{\nn \times \mm}$  and a (full-dimensional convex) cone $\cc$ lying in inner product space $(V, \la \cdot, \cdot \ra)$, the {\em cone  factorization problem} concerns finding two collections of vectors $ \ba_1,\ldots, \ba_{\nn} $ in the cone $ \cc $ and $ \bb_1,\ldots, \bb_{\mm} $ in the dual cone $\cc^*$~where
\begin{equation}\label{conefactorization}
X_{ij} =\langle \ba_i, \bb_j \rangle, \text{ for all } i\in [\nn], j\in [\mm].
\end{equation}
In the case where $\cc=\R^k_+$, the cone factorization problem reduces to  the {\em Non-negative Matrix  Factorization} (NMF) problem in which we seek a collection of non-negative vectors ${a_1,\ldots, a_m\in\R^k_+}$ and $b_1,\ldots,b_n\in \R^k_+$ such that $X_{ij}= \la a_i, b_j\ra $ for all $ i\in [m]$ and $j\in [n]$.  The smallest $k\in \mathbb{N}$ for which $X$ has an $k$-dimensional NMF is the {\em nonnegative rank} of the matrix $X$. NMFs were initially studied within the field of linear algebra (see, e.g.,~\cite{cohen}), and later gained prominence as a dimensionality reduction tool providing interpretable parts-based representations  of non-negative data
 \cite{LSnature}.  
 
 In the context of LP-lifts, Yannakakis~\cite{yannakakis} showed that the existence of an $\R^k_+$-lift for a polytope $P=\{x\in \R^d: \la c_i, x\ra \le d_i, \ i\in [\ell]\}$ is equivalent to the existence of a $k$-dimensional  NMF  of its {\em slack matrix}~$S_P$,   a nonnegative rectangular matrix  
 where the rows are indexed by the bounding hyperplanes   $\{ x : \la c_i, x\ra \le d_i \}$ of~$P$,  the columns are indexed by the extreme points $v_j$, and the 
$ij$-entry is  given by   $S_{ij}=d_i- \la c_i, v_j\ra$.
%
Subsequently, Gouveia, Parrilo, and Thomas \cite{lifts} showed  that a polytope $P$ admits an extended formulation over a (full-dimensional) closed convex cone $\cc$ if its slack matrix $P$ admits an exact cone factorization over $\cc$.  In fact, this relationship holds for convex sets beyond polyhedral ones, although it is then necessary to extend the notion of a slack matrix to that of a \emph{slack operator} indexed over the infinite set of extreme points and/or facets. 


Cone factorizations also   have  important applications beyond optimization.  One such prominent example is in quantum information science,  where factorizations over the PSD cone $\mathbb{S}^k_+$ are relevant to the quantum correlation generation problem \cite{qcorr} and  one-way quantum communication complexity \cite{sam}.  Specifically, given a non-negative matrix $X$, the {\em positive semidefinite matrix  factorization} (PSDMF) problem concerns computing two families of $k \times k$ PSD matrices $A_1,\ldots, A_{\mm}$ and $B_1,\ldots,B_{\nn}$ such that $X_{ij}= {\rm tr}(A_iB_j)$ for all $ i\in [\mm],\ j\in [\nn]$.  The smallest $k$ for which $X$ admits an $k$-dimensional PSDMF is known as the {\em PSD-rank} of $X$~\cite{psdrank}. 

\paragraph{Computing cone factorizations.}  The task of computing an exact cone factorization is in general  intractable. Concretely,  in the specific setting where $\cc$ is the non-negative orthant, Vavasis showed that even computing  the non-negative rank is  NP-hard \cite{vavasis}.  On the positive side, Arora et~al. propose an algorithm for computing $k$-dimensional NMFs whose complexity is polynomial time in the dimensions of the matrix, provided the rank parameter $k$ is held constant~\cite{arora}.  

Despite the hardness of NMFs, a wide range of numerical algorithms for computing (approximate) NMFs 
have been developed and implemented in a wide range of data analytical applications.  Most, if not all, algorithmic approaches  are based on the principle of alternating minimization. 
One of the most prominent approaches for computing NMFs is the Multiplicative Update (MU) algorithm proposed by Lee and Seung \cite{LS00}.  The update scheme is based on the majorization-minimization framework \cite{Lange}, and it operates by performing pointwise scaling by carefully selected non-negative weights.  In fact, it is  the simplicity of the MU scheme that drives its popularity.  Other alternative methods include variants of projected gradient methods and coordinate descent.

There are also a wide range of methods for computing PSDMFs, which    operate in a similar vein by  alternating minimization. 
In particular, Vandaele et al. propose  algorithmic   approaches  based on the projected gradient method and coordinate descent \cite{VGG18}, while the authors \cite{LF20, LF20b, LLTF20} develop algorithms by drawing connections between PSDMFs to the affine rank minimization and the phase retrieval problems from the signal processing literature.  Recently, Soh and Varvitsiotis introduced the Matrix Multiplicative Update (MMU) method, which is the analogue of the MU scheme for computing PSDMFs in which updates are performed by congruence scaling with appropriately chosen PSD matrices \cite{YSV}.  In particular, the Matrix Multiplicative Update scheme retains the simplicity that the MU update scheme for NMF offers. 

\paragraph{Contributions.}  In this work, we introduce the {\em symmetric-cone multiplicative update} (SCMU) algorithm    for computing cone factorizations \eqref{conefactorization} in the case where  the cone $\cc$ is \emph{symmetric}; that is, it is self dual and homogenous (i.e., the automorphism group of $\cc$ acts transitively on $\mathrm{int}(\cc)$).  The SCMU algorithm  corresponds to   Lee-Seung's Multiplicative update  algorithm \cite{LS00} for NMF when $\cc$ is the non-negative orthant, and to  the Matrix Multiplicative update  algorithm in \cite{YSV} for PSDMF when $\cc$ is the cone of PSD matrices.

The SCMU  scheme is based on the principles of the majorization-minimization framework, and is {\em multiplicative} in the sense that  iterates are updated  by applying an appropriately chosen automorphism of (the interior) of the cone $\cc$.  
As a result, the SCMU  algorithm ensures that iterates remain in the interior of the cone $\cc$, provided it is initialized in the interior.  
In terms of performance guarantees for the SCMU  algorithm,  we prove is that the Euclidean squared loss is non-increasing along its trajectories and moreover, we also show that fixed points our our scheme correspond to first-order stationary points.  Additionally,  if the starting factorization lies in a direct sum of simple symmetric cones,   the direct sum structure    is preserved throughout the execution of the SCMU algorithm. As such, the SCMU algorithm specifies a method for computing hybrid cone factorizations. 

In terms of applications, when applied to computing cone factorizations of slack matrices, the SCMU algorithm  provides a practical way to compute conic lifts of convex sets -- for instance, it can be used to compute approximate SOCP lifts when $\cc$ is chosen to be (products of) the second order cone $\soc_k=\{(x,t)\in \mathbb{R}^{k}\times \R: \|x\|_2\le t\} $. In particular, we give explicit examples where we apply the SCMU algorithm for computing SOCP lifts of regular polygons.

\paragraph{Paper Organization.}  In Section 2, we provide necessary background material concerning Euclidean Jordan Algebras and Symmetric Cones.  In Section 3, we describe our approach and derive the multiplicative update algorithm for symmetric-cone factorizations.  In Section 4, we show that the Euclidean square loss is nonincreasing along the algorithms' trajectories and that fixed points satisfy the first-order optimality conditions. In Section 5, we conclude with numerical experiments focusing on lifts over the second-order cone.

\section{Euclidean Jordan Algebras and Symmetric Cones}

In this section, we provide brief background on symmetric cones to describe our algorithm and our analysis.  Our discussion requires the formal language of \emph{Euclidean Jordan algebras} (EJAs) from which symmetric cones arise.  For further details and omitted proofs, we refer the reader to~\cite{FK,vanden}.  

\medskip 

\noindent {\bf Jordan Algebras.}  Let $\cj$ be a finite-dimensional vector space endowed with a bilinear product  $\circ: \mcj\times \mcj \to \mcj$.  We say that the pair $(\cj,\circ)$ form  a \emph{Jordan algebra} if the following properties hold:
\begin{equation*}
\begin{aligned}
\bx \circ \by & = \by \circ \bx, \\
\bx^2 \circ (\bx \circ \by) & = \bx \circ (\bx^2\circ  \by). 
\end{aligned}
\end{equation*}
Here, we use the shorthand notation  $\bx^2=\bx\circ \bx$.  In the remainder of this paper, we {assume} that the Jordan algebra $(\cj,\circ)$ has an identity element; that is, there exists $\be\in \mcj$ such that $\be \circ \bx = \bx \circ {e} = \bx$.

\medskip

\noindent {\bf Euclidean Jordan Algebras.} 
A Jordan algebra  $(\cj,\circ)$ over $\R$ 
 which is equipped with  an associative inner product $(\cdot,\cdot)$, (i.e., 
$( \bx \circ \by , \bz ) = ( \by , \bx \circ \bz ),$ for all $ x,y,z$)
is called~\emph{Euclidean}.
\medskip 

\noindent {\bf Spectral decomposition and powers.}  Our algorithm requires the computation of square-roots as well as inverses of Jordan Algebra  elements.  To explain how these  are defined, we require a \emph{spectral decomposition theorem} for EJAs.  In what follows, we describe a version of this theorem based on \emph{Jordan frames} (see Type-II spectral decomposition~\cite[Theorem III.1.2]{FK}).  

More concretely, an {\em idempotent} is an element $x\in \cj$ satisfying $x^2=x$.  We say that an idempotent is {\em primitive} if it is nonzero and  cannot be written as a sum of two nonzero idempotents.  We say that a collection of primitive idempotents $\{ c_1, \ldots, c_m \} \subseteq \cj $ form a {\em Jordan frame} if they satisfy $(i)$  $\bc_i\circ \bc_j= 0$ if and only if $i \neq j$, and $ (ii)$ $\sum_i {c}_i = \be$.  It follows that distinct elements of a Jordan frame $\bc_i$ are orthogonal with respect to the inner product $( \cdot,\cdot )$.

For every $x\in \cj$ there exists a Jordan frame    $c_1,\ldots, c_r$, and real numbers $\lambda_1,\ldots, \lambda_r$~with
$x=\sum_{j=1}^r\lambda_jc_j.$
Here, the value $r$ is called the \emph{rank} of $x$.  The scalars $\{\lambda_1,\ldots, \lambda_r\}$ are called  the \emph{eigenvalues} of $x$ and -- up to re-ordering and accounting for multiplicities -- are uniquely specified. 
%

Given $\bx\in \cj$ with spectral decomposition $\bx = \sum_i \lambda_i {c}_i$, we define its $a$-th power as $\bx^a=\sum_i\lambda_i^a\bc_i$ whenever $\lambda_i^a$ exists for all $i$.  We say that an element $x\in \cj$ is {\em invertible} if all its eigenvalues are nonzero. 

\medskip

\noindent{\bf The canonical trace inner product.}  In our description of an EJA so far, we have only assumed the existence of an inner product $(\cdot , \cdot )$.  There is in fact a canonical choice:  Given the spectral decomposition $x=\sum_{j=1}^{r} \lambda_j c_j$, define the {\em trace} to be the sum of the eigenvalues ${\rm tr}(x)=\sum_{j=1}^r \lambda_j$.  Then, the bilinear mapping ${\rm tr}(x\circ y)$ is positive definite,  symmetric, and also satisfies the associativity property (see, for instance, \cite[Proposition II.4.3]{FK}).
Furthermore, in the case where the EJA is \emph{simple} (i.e., it cannot be expressed as the direct sum of smaller EJAs), the inner product  $(\cdot , \cdot )$ is  a positive scalar multiple of the canonical one  ${\rm tr}(x\circ y)$ (\cite[Proposition III.4.1]{FK}). 
In the remainder of this paper we use the notation $\lj x, y \rj={\rm tr}(x\circ y)$ to denote the canonical EJA  inner product.

\medskip

\noindent{\bf The Lyapunov transformation and the quadratic representation.} 
 Consider  an EJA $(\cj,\circ)$.  As the algebra product $\circ$ is bilinear, given any $\bx$, there is a matrix $L({\bx})$~satisfying
$ \bx \circ \by=L({\bx}) \by$ for all $\by \in \cj$.  We note that it is fairly common in the literature to express the defining properties of an EJA in terms of the Lyapunov operator $L(x)$. 
For isntance, the requirement $x^2\circ (\bx \circ \by)   = \bx \circ (  \bx^2 \circ \by )$ is equivalent to the requirement that the operator $L({\bx})$ commutes with $L({\bx^2})$, while the requirement $\langle \bx \circ \by , \bz \rangle = \langle \by, \bx \circ \bz \rangle$ is equivalent to the requirement $\la L(\bx) \by, \bz\ra=\la \by, L({\bx})\bz\ra$, i.e., the operator $L(\bx)$ is symmetric with respect to the trace inner product.

The \emph{quadratic representation} of $\bx\in \cj$ is defined by 
$P({\bx})= 2L^2({\bx}) - L({\bx^2})$.  The term quadratic alludes  to the fact that $P({\bx})\be=\bx^2$. 
The quadratic representation satisfies the following  properties:
\begin{align}
&\la P(\bx) \by, \bz\ra=\la \by, P({\bx})\bz\ra \label{Pxsymmetric}\\
 &P({\lambda \bx})=\lambda^2P(\bx) \label{scalar}\\
 & P({P({\bx})\by})=P({\bx}) P({\by})P({\bx}) \label{unfolding}\\ 
&(P({\bx}))^a=P({\bx^a}),  \text{ if } x^a \text{ is defined }\label{powers}\\
&P(\bx)(\bx^{-1})=\bx, \text{ if } \bx \text{ is invertible}\label{powers2}\\
&P(\bx)(\cq)=\cq \text{ and } P({\bx})(\itr(\cq)) =\itr(\cq),  \  \text{ for all invertible } \bx\in \cj \label{invariance}\\
& P(\bx)  \succeq 0, \text{ for any }\bx\in \cq\label{psdness}\\
& P(\bx)  \succ 0, \text{ for any }\bx\in {\rm int}(\cq)\label{pdness}
\end{align}

\noindent{\bf Cone of squares and symmetric cones.}  
Given an inner product space $(V, \phi(\cdot, \cdot))$, we say that a cone $\cc \subset V$ is {\em symmetric} if:
\begin{enumerate}
\item[$(i)$] $\cc$ is \emph{self-dual}; i.e., $\cc^*=\{y\in V: \phi(y,x)\ge 0 \ \forall x\in \cc\} = \cc$, and
\item[$(ii)$] $\cc$ is {\em homogeneous}; i.e., given $u,v \in~{\rm int}(\cc)$, there is an invertible linear map $T$ such that $T(u)=v$, and $T(\cc)=(\cc)$ (or $\mathrm{Aut}(\cc)$ acts transitively on ${\rm int}(\cc)$).
\end{enumerate}
Given an EJA $(\cj,\circ, ( \cdot, \cdot ))$, the cone of squares is defined as the set  
$$\cq=\{ \bx^2 : \bx \in \cj \}.$$ 
The set $\cq$ is self-dual with respect to the inner product $\la x,y\ra={\rm tr}(x\circ y)$, and hence closed and convex.

The key connection between symmetric cones 
and EJAs is that \emph{all} symmetric cones arise as cone of squares of some EJA; see e.g., \cite[Theorem III.3.1]{FK}.  In particular, the homogeneity property of the cone of squares follows from the existence of a \emph{scaling point}; i.e., for    $x,y\in {\rm int}(\cq)$ it~holds
\beq\label{scalingp}
P(w)x=y, \ \text{ where } \ w=P(\bx^{-1/2})(P({\bx}^{1/2})\by)^{1/2}.
\eeq
  As  both $x,y\in {\rm int}(\cq)$, it follows by \eqref{pdness} that $P(y^{1/2})$ and $ P(x^{-1/2})$ are positive definite. Consequently, the product  $P(w)=P(y^{1/2})P(x^{-1/2})$ is also positive definite and thus invertible.  By~\eqref{invariance}, $P(w)$  defines an automorphism of $\cq$.

\medskip




\noindent{\bf Classification of EJAs and Symmetric Cones.}  There is in fact a complete classification of all  EJAs.  Let $\mathbb{C}, \mathbb{H}, \mathbb{O}$ denote the fields of complex numbers, quaternions,  and octonions respectively.  Then every \emph{finite}-dimensional EJA $(\cj,\circ)$ is isomorphic to a direct sum of these \emph{simple} EJAs:
\begin{enumerate}
\item Symmetric matrices over $\R$ or Hermitian matrices over $\mathbb{C}$ or $\mathbb{H}$ with  $X \circ Y =~(XY+YX)/2$.
\item The space $ \R\times \R^k$ with product $\bx \circ \by = (x_0, \bx_1)\circ  (y_0, \by_1)= (\bx^\sfT y , x_0 \by_1 + y_0 \bx_1)$.
\item The space of 3-by-3 Hermitian matrices over $\mathbb{O}$ with product $X \circ Y = (XY+YX)/2$.
\end{enumerate}
Subsequently, since all symmetric cones arise as cones of squares of some EJA, it follows that any symmetric cone corresponds to   a direct sum of:
\begin{enumerate} 
  \item Symmetric PSD matrices over $\R$ or Hermitian PSD matrices over $\mathbb{C}$ 
  or $\mathbb{H}$
  \item  Second-order cones, i.e., $\soc_d=\{(x,t)\in \R^d\times \R:\  \|x\|_2\le t\}$.
  \item 3-by-3 PSD matrices over octonions $\mathbb{O}$.
\end{enumerate} 
In Table \ref{fig:summary_EJA}, we summarize the basic information for the EJA corresponding to the cone of positive semidefinite matrices and the second order cone.

\medskip

\medskip 
\noindent {\bf The Geometric Mean.} Given a symmetric cone $\cc$, the {\em (metric) geometric mean} of two elements $\bx, \by\in \itr(\cc)$ is defined as the scaling point that takes $x^{-1}$ to $y$ \cite{sturm}, i.e.,  
\beq\label{cmean}
\bx\#\by=P(\bx^{1/2})(P({\bx}^{-1/2})\by)^{1/2}.
\eeq
As established in \eqref{scalingp}, the geometric mean satisfies:
\beq\label{cmean2}
P({\bx\#\by})\bx^{-1}=\by.
\eeq
In fact, it turns out that the geometric mean is the unique element of ${\rm int}(\cc)$ satisfying~\eqref{cmean2}. Indeed, suppose $w\in{\rm int}(\cc)$ satisfies $P(w)\bx^{-1}=\by$.  We then have
\beq\label{svvdsf}
P(x^{-1/2})y=P(x^{-1/2})P(w)\bx^{-1}=P(x^{-1/2})P(w)P(x^{-1/2})e=P(P(x^{-1/2})w)e=(P(x^{-1/2})w)^2.
\eeq
As $P(x^{-1/2})w$ and $P(x^{-1/2})y$ are both elements of $\cc$ (recall \eqref{invariance}), the equality in \eqref{svvdsf} implies that $P(x^{-1/2})w=(P(x^{-1/2})y)^{1/2}$ and thus $w=P(\bx^{1/2})(P({\bx}^{-1/2})\by)^{1/2}=x\#y$.
%
%
%
%
%
%

Finally, the geometric mean satisfies the following useful properties (e.g. see \cite{sturm}): 
\beq\label{meanwithe}
  \bx\#\by=\by\#\bx, \quad     (\bx\#\by)^{-1}=\by^{-1}\#\bx^{-1}, \quad  \bx\#\be=\bx^{1/2}.
  \eeq

%

\begin{table}
\centering
\begin{tabular}{|c|c|c|}
\hline
&&\\
Cone of squares & PSD matrices & Second-order cone \\
& &\\
\hline
\hline
Ambient dimension & $X \in \mathbb{R}^{n \times n}$ & $(t,x) \in \mathbb{R} \times \mathbb{R}^{n}$ \\
\hline
$\circ$ & $X \circ Y = \frac{XY+YX}{2}$ & $(t,x) \circ (s,u) = (st + \la x,u\ra, t u + s x)$ \\
\hline
Identity & $I$ & $(1,0,\ldots,0)$ \\
\hline
Trace & $\sum X_{ii}$ & $2t$ \\
\hline
Rank (of interior) & $n$ & $2$ \\
\hline
Eigendecomposition & Spectral decomposition & $(t+\|x\|) \left(\begin{array}{c}
\frac{1}{2} \\ \frac{x}{2\|x\|} \end{array} \right)
+
(t-\|x\|) \left(\begin{array}{c}
\frac{1}{2} \\ -\frac{x}{2\|x\|} \end{array} \right)
$\\
\hline
$x^{1/2}$ & $X^{1/2}$ & $\sqrt{t+\|x\|} \left(\begin{array}{c}
\frac{1}{2} \\ \frac{x}{2\|x\|} \end{array} \right)
+
\sqrt{t-\|x\|} \left(\begin{array}{c}
\frac{1}{2} \\ -\frac{x}{2\|x\|} \end{array} \right)
$ \\
\hline
$x^{-1}$ & $X^{-1}$ & $\frac{1}{t+\|x\|} \left(\begin{array}{c}
\frac{1}{2} \\ \frac{x}{2\|x\|} \end{array} \right)
+
\frac{1}{t-\|x\|} \left(\begin{array}{c}
\frac{1}{2} \\ -\frac{x}{2\|x\|} \end{array} \right)
$ \\
\hline
\end{tabular}
\caption{Summary of key properties of the EJA associated to the cone of positive semidefinite matrices and the second order cone.}
\label{fig:summary_EJA}
\end{table}
\section{Deriving the Symmetric-cone  Multiplicative Update Algorithm}

Let $\cc\subseteq \R^d$ be a symmetric cone,  
and let $X\in \R^{m\times n}_+$ be an entrywise non-negative matrix.  Our goal is to compute vectors  $ \ba_1,\ldots, \ba_{m} $ and $ \bb_1,\ldots, \bb_{n} $ belonging to the cone $ \cc $ such that
$$
X_{ij} \approx \langle \ba_i, \bb_j \rangle, \text{ for all } i\in [m], j\in [n].
$$
More formally, we frame our problem as a minimization instance over the squared loss objective:
\begin{equation}\label{opt}
\underset{\ba_i\in \cc,\ \bb_j\in \cc}{\arg \min} \sum_{i,j} \left( X_{ij} - \langle \ba_i, \bb_j \rangle \right)^2.\qquad 
\end{equation}
In order to  (approximately) solve \eqref{opt}, we alternate between minimizing the $\ba_i$'s and the $\bb_j$'s.  Consider the sub-problem corresponding to fixing the variables $\{\ba_i\}$ and minimizing over $\{\bb_j\}$.  The objective \eqref{opt} is separable in the $\bb_j$'s.  After dropping the suffix $j$ to simplify our notation, the problem simplifies~to
 \begin{equation}\label{main}
\underset{\bb}{\arg \min} ~ \| \ca \bb - \bx \|_2^2 \qquad \mathrm{s.t.} \qquad \bb \in \mathcal{K},
\end{equation}
where $\bx$ is a fixed vector (the $j$-th column of $X$) and $\ca:\R^d\to \R^{m}$  is the linear mapping 
$$ \bx \mapsto  \ca \bx=( \la \ba_1,\bx \rangle, \ldots ,\la \ba_{m},\bx\ra)^\sfT.$$

We solve the convex optimization problem \eqref{main} via the Majorization-Minimization (MM) approach (see, e.g.,~\cite{Lange} and references therein).  In order to solve a problem  $\min \{f(x): x\in \mathcal{X}\}$ using the  MM framework, we need to  identify a family of \emph{auxilliary} functions $u_x: {\rm dom} f\to \R$, indexed by $x\in {\rm dom} f$, satisfying these conditions
\beq\label{MM}
\begin{aligned}
f(y)&\le u_x(y),  \text{ for all } y\in \mathcal{X}, \text{ and} \\
f(x)&=u_x(x).
\end{aligned}
\eeq
The MM update scheme is given by
$$ x^{k+1} =\arg \min  \{ u_{x^{k}}(y):\ y \in \mathcal{X}\}.$$
Subsequently, the objective $f$ is non-increasing along the tracjectory of $x^k$, which follows easily from the inequalities
$$ f(x^{k+1})\le u_{x^{k}}(x^{k+1})\le u_{x^{k}}(x^{k})=f(x^{k}).$$
Recall that in our setting, the goal is to minimize  the function
$f(\bb) = \frac{1}{2} \|  \ca \bb - \bx \|_2^2$
over the symmetric cone $\cc$.  As the objective is quadratic, the Taylor expansion of $f$ at the past iterate $\bb_k$ is
\beq\label{taylor}
 f(\bb)=f({\bb}_k) +  (\bb-{\bb}_k)^\sfT   \nabla f({\bb}_k) + \frac{1}{2} \|\ca(\bb-{\bb}_k)\|_2^2.
\eeq
Suppose we restrict our search to quadratic auxilliary functions of the form
\beq\label{auxilliaryf}
u_{\bb_k}(\bb)=f({\bb}_k) + (\bb-{\bb}_k)^\sfT \nabla f({\bb}_k) + \frac{1}{2}  (\bb-\bb_k)^\sfT P(\bw)(\bb-\bb_k),
\eeq
for some appropriate $\bw\in {\rm int }(\cc)$. Using such an auxilliary function, to compute the next iterate, we need to minimize  the function in \eqref{auxilliaryf} over the cone $\cc$
\beq\label{auxproblem}
b_{k+1}=\underset{\bb\in \cc}{\arg \min}\ u_{\bb_k}(\bb).
\eeq
Note that although the objective \eqref{auxilliaryf} is strictly convex since choosing $ w\in {\rm int }(\cc)$ ensures that $P(w)$ is positive definite, it is not immediately clear how one handles the constraint $\bb\in \cc$.  Instead, we show that it is possible to make a specific choice of $w\in {\rm int}(\cc)$ such that the minimizer of the unconstrained auxilliary problem \eqref{auxproblem} lies in the interior of $\cc$.  In such a setting, we would have
\beq\label{gergre}
\underset{\bb\in \cc}{\arg \min}\ u_{\bb_k}(\bb)=\underset{\bb\in \R^d}{\arg \min}\ u_{\bb_k}(\bb),
\eeq
and hence we can simply calculate $\bb_{k+1}$ as the unconstrained minimum of \eqref{auxilliaryf}, namely
\beq\label{ufsef}
\bb_{k+1}=\bb_k-P(\bw^{-1})\nabla f({\bb}_k)=\bb_k-P(\bw^{-1})(\ca^\sfT \ca\bb_k-\ca^\sfT \bx).
\eeq
Furthermore, suppose we pick $\bw\in {\rm int}(\cc)$ so that 
\beq\label{vdsfbsf}
\bb_k-P(\bw^{-1})\ca^\sfT \ca\bb_k=0.
\eeq
Then, the MM update $b_{k+1}$ is given by
$$\bb_{k+1}=P(\bw^{-1})\ca^\sfT \bx=\sum_{i=1}^{m}x_i P({\bw^{-1}})\ba_i.$$
Suppose that $\ba_i\in {\rm int}(\cc)$, and that the vector $x$ is non-zero.  From \eqref{invariance}, it follows that $\bb_{k+1}$ is a non-negative linear sum of elements in ${\rm int}(\cc)$, and hence $\bb_{k+1} \in {\rm int}(\cc)$ so long as $x\ne 0$.

The equation \eqref{vdsfbsf} specifies $\bw$ uniquely; specifically, \eqref{vdsfbsf} is equivalent to $P(\bw)\bb_k =\ca^\sfT \ca\bb_k$, which by the scaling point interpretation of the geometric mean (recall \eqref{cmean2}) implies that 
\beq\label{specialw}
\bw=\bb_k^{-1}\#\ca^\sfT \ca\bb_k.
\eeq
Note that for \eqref{specialw} to exist, we require $\bb_k$ and $\ca^\sfT \ca\bb_k$ to be in ${\rm int }(\cc)$. 

Finally, to check that the  function $u_{b_k}(b)$ given in   \eqref{auxilliaryf} corresponding to $\bw=\bb_k^{-1}\#\ca^\sfT \ca\bb_k
$
is indeed an auxilliary function, we need to verify that the two conditions \eqref{MM} are  satisfied. The proof of this fact is deferred to the next section and is the main technical result in this paper. 

  Summarizing the preceding discussion, employing the MM approach to minimize the function
$f(\bb) = \frac{1}{2} \|  \ca \bb - \bx \|_2^2$
over the symmetric cone $\cc$  using an auxilliary function of the form \eqref{auxilliaryf} with $\bw=\bb_k^{-1}\#\ca^\sfT \ca\bb_k
$
leads to the following  the update rule:
$$\bb_{k+1}=
P(\bb_k\#(\ca^\sfT \ca\bb_k)^{-1})\ca^\sfT \bx.$$
In a similar fashion, we get an  update rule for the $a_i$'s when the $b_j$'s are fixed, where $\ca$ is replaced~by
$$ \bx \mapsto  \mathcal{B} \bx=( \la \bb_1,\bx \rangle, \ldots ,\la \bb_{\mm},\bx\ra).$$
We summarize our  procedure for computing factorizations over symmetric cones in Algorithm \ref{algo}.
 
\begin{algorithm}[h!]
\textbf{Input:} A non-negative matrix   $X\in \R^{m\times n}_{+}$\\
    \textbf{Output:}    Vectors $\ba_1,\ldots, \ba_{m}, \bb_1, \ldots, \bb_{n}\in \cc$ with  $X_{ij}\approx \la \ba_i, \bb_j\ra, \forall i,j$
 \begin{algorithmic}
  \STATE While stopping criterion not satisfied: 
  \STATE \begin{equation*}
\begin{aligned}
& a_i \leftarrow P(a_i\#(\mathcal{B}^\sfT \mathcal{B}a_i)^{-1})\mathcal{B}^\sfT X_{i :}, \text{ for all } 1\le i \le m \\
& \bb_j \leftarrow P(\bb_j\#(\ca^\sfT \ca\bb_j)^{-1})\ca^\sfT X_{:j}, \text{ for all } 1\le j\le n
\end{aligned}
\end{equation*}  
\end{algorithmic}
\caption{Symmetric-Cone Multiplicative Update (SCMU) algorithm}
\label{algo}
\end{algorithm}
  
  \subsection{Two important special  cases}
%

First, we specialize the SCMU algorithm to the setting where $\cc=\R^k_+$, in which case the SCMU algorithm  gives an iterative method for computing NMFs. The non-negative orthant is the cone of squares of the EJA $(\R^k, \circ)$, where  the Jordan product is componentwise multiplication, i.e., $x\circ y={\rm diag}(x)y$. Moreover, the trace of $x$ is just its  1-norm, the Lyapunov transformation is $L(x)={\rm diag}(x)$ and  the quadartic mapping $P(x)={\rm diag}(x)^2$. Finally, the metric geometric mean~\eqref{cmean} of $x, y\in \R^k_{++}$ is given by $x\#y=(\sqrt{x_1y_1}, \ldots, \sqrt{x_ky_k})$. 

Putting everything together, the $\ell$-th coordinate of the vector $b_j$ is updated by:
\beq
b_j(\ell)\leftarrow b_j(\ell) {\sum_{i=1}^ma_i(\ell)X_{ij} \over \sum_{i=1}^ma_i(\ell)\la a_i,b_j\ra}.
\eeq
These updates correspond to the multiplicative update  rule introduced by Lee and Seung  in~\cite{LS00}, which is one
    of the most widely used  approaches for calculating NMFs.
    
  Next, we specialize  the SCMU algorithm for cone  factorizations with $k\times k$  PSD factors (with real entries). Letting $\mathbb{S}^k$ denote the space of $k\times  k$ real symmetric matrices, the (real) $k\times k$ PSD cone   is the cone of squares of the EJA $(\mathbb{S}^k,\circ)$, where  the Jordan product is given by $X\circ Y=(XY+YX)/2$. In this setting, the Lyapunov operator $L(X)$ and the quadartic representation $P(X)$ are superoperators (i.e.,  linear operators acting on a vector space of linear operators), and are concretely  given by 
  $$
  \begin{aligned}
  {\rm vec}(L(X)(Y))&={1\over 2}((X\otimes I)+(I\otimes X)){\rm vec}(Y)\\
   {\rm vec}(P(X)Y)&=(X\otimes X){\rm vec}(Y),
   \end{aligned}
  $$
  where ${\rm vec}(\cdot)$ is the vectorization operator. Using  that ${\rm vec}(ABC)=(C^\top \otimes A){\rm vec}(B)$, we~get
  \beq\label{csdvsdv}
  P(X)Y=XYX.
  \eeq
  Moreover, the trace is just the usual trace of a symmetric matrix (i.e., the sum of its eigenvalues). Lastly, using \eqref{csdvsdv}, the metric geometric mean \eqref{cmean} specializes to  	
  $$X\# Y= X^{1/2} (X^{-1/2} Y X^{-1/2})^{1/2} X^{1/2},$$
  the usual geometric mean of two positive definite matrices, e.g. see \cite{bhatia}. 
  Putting everything together,  we have that
\begin{equation*}
\begin{aligned}
\mathcal{A} : \mathbb{S}^{k} \rightarrow \mathbb{R}^{m} & \quad  Z\mapsto \left(
\mathrm{tr}(A_1 Z), \ \ldots \ 
,\mathrm{tr}(A_m Z) \right)^\top\\
\mathcal{A}^\top:  \R^m\rightarrow \mathbb{S}^k & \quad x \mapsto \sum_{i=1}^mx_iA_i
\end{aligned}
\end{equation*}
and  thus, the SCMU algorithm in the case $\cc=\mathbb{S}^k_+$ specializes to:
$$B_j \leftarrow S_j(\ca^\sfT X_{:j})S_j,  \text{ where } S_j=B_j\#([\ca^\sfT \ca]B_j)^{-1}.$$
This is exactly the Matrix Multiplicative update method derived in \cite{YSV}.

\section{Performance Guarantees of the SCMU Algorithm}

Our first result is to show that, given a symmetric cone $\cc\subseteq \R^d$, the function $u_{\tb}(\bb)$ defined in \eqref{auxilliaryf} does indeed parameterize a family of auxilliary functions for $f(\bb) = \frac{1}{2} \|  \ca \bb - \bx \|_2^2$ over $\cc$.  To do so, we need to verify that the two properties given in \eqref{MM} do indeed hold.  We obviously have that $f(\tb)=u_{\tb}(\tb)$, and hence it remains to verify  the domination property, namely  that  
\beq
f(\bb)\le u_{\tb}(\bb), \ \text{ for all }  \bb \in \cc.
\eeq 
In fact, we show in the next theorem that this bound holds for all $b\in \R^d$. 

\begin{theorem}\label{auxilliary}
Let $\bb_k \in{\rm int}(\mathcal{K})$ and define  $\ca$ to be the linear map
$$ \bx \mapsto  \ca \bx=( \la \ba_1,\bx \rangle, \ldots ,\la \ba_{\mm},\bx\ra)^\sfT,$$
where $a_1,\ldots, a_{\mm}\in {\rm int}(\cc)$.
Furthermore, let  $f(\bb) = \frac{1}{2} \|  \ca \bb - \bx \|_2^2$ and 
$$u_{\tb}(\bb)=f(\tb) + (\bb-\tb)^\sfT  \nabla f(\tb) + \frac{1}{2}   (\bb-\tb)^\sfT P(\tb^{-1}\#\ca^\sfT \ca\tb)(\bb-\tb).$$
Then, we have that 
\begin{equation}\label{maintechinal}
\ca^\sfT \ca \preceq  P({\tb^{-1}\#\ca^\sfT \ca \tb}).
\end{equation}
In particular, this immediately implies that 
\beq\label{dom}
f(b)\le u_{\tb}(\bb),  \text{ for all }\bb \in \mathbb{R}^{d}.
\eeq
\end{theorem}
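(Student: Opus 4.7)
The plan is to reduce \eqref{maintechinal} to an operator bound of the form $T \preceq I$, and then extract this bound from a Perron--Frobenius type argument exploiting the self-duality of $\cc$. I would harness the scaling-point characterization of the geometric mean: setting $w := \tb^{-1} \# \ca^\sfT \ca \tb$, identity \eqref{cmean2} gives $P(w)\tb = \ca^\sfT \ca \tb$. Since $\tb$ and $\ca^\sfT \ca \tb$ both lie in $\itr(\cc)$ (the latter being a nonnegative combination of the $a_i \in \itr(\cc)$), we have $w \in \itr(\cc)$, and by \eqref{powers}--\eqref{pdness} the operator $P(w^{-1/2})$ is symmetric positive definite with inverse $P(w^{1/2})$. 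Conjugating \eqref{maintechinal} by $P(w^{-1/2})$ therefore reduces it to
\beq\label{planTI}
T \preceq I, \qquad \text{where} \qquad T := P(w^{-1/2}) \, \ca^\sfT \ca \, P(w^{-1/2}).
\eeq

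I would then establish three properties of $T$. Writing $\hat a_i := P(w^{-1/2}) a_i$, we have $T = \sum_i \hat a_i \hat a_i^\sfT$, so $T$ is symmetric and positive semidefinite. Since each $a_i \in \itr(\cc)$ and $P(w^{-1/2})$ is an automorphism of $\cc$ by \eqref{invariance}, every $\hat a_i$ also lies in $\itr(\cc)$; self-duality of $\cc$ then forces $T(\cc) \subseteq \cc$. Finally, setting $\hat \tb := P(w^{1/2})\tb \in \itr(\cc)$, a short calculation chaining $P(w^{-1/2}) P(w^{1/2}) = I$ and $P(w)\tb = \ca^\sfT \ca \tb$ yields $T \hat \tb = \hat \tb$, so $1$ is an eigenvalue of $T$ with an eigenvector in $\itr(\cc)$.

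The conclusion would follow from Krein--Rutman. Since $T$ preserves the closed, pointed, solid cone $\cc$, there exists an eigenvector $v \in \cc\setminus\{0\}$ with $Tv = \rho(T) v$. If $\rho(T) > 1$, then $v$ and $\hat \tb$ lie in distinct eigenspaces of the symmetric operator $T$, hence $\la v, \hat \tb \ra = 0$; but self-duality of $\cc$ together with $v \in \cc\setminus\{0\}$ and $\hat \tb \in \itr(\cc)$ forces $\la v, \hat \tb \ra > 0$, a contradiction. Therefore $\rho(T) \le 1$, and since $T$ is symmetric PSD this is exactly $T \preceq I$, establishing \eqref{maintechinal}. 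The second assertion \eqref{dom} is then immediate: by \eqref{taylor}, $u_{\tb}(\bb) - f(\bb) = \tfrac{1}{2}(\bb-\tb)^\sfT (P(w) - \ca^\sfT \ca)(\bb-\tb) \ge 0$ on all of $\R^d$.

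The main obstacle is assembling the Perron--Frobenius step cleanly. It rests on three ingredients that are natural in the symmetric-cone setting but must be combined with care: Krein--Rutman (to produce an eigenvector of $T$ in $\cc$ corresponding to $\rho(T)$), self-adjointness of $T$ (forcing orthogonality of eigenvectors across distinct eigenvalues), and strict positivity of the pairing $\la\cdot,\cdot\ra$ between $\cc\setminus\{0\}$ and $\itr(\cc)$ inherited from self-duality. A more direct alternative, more in the spirit of the paper's abstract, would derive \eqref{planTI} from an extension of Lieb's concavity theorem or von Neumann's trace inequality to symmetric cones, bypassing Krein--Rutman entirely.
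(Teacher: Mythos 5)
Your proof is correct, but it takes a genuinely different route from the paper's. Both arguments share the same opening move: you conjugate by $P(w^{-1/2})$ with $w=\tb^{-1}\#\ca^\sfT\ca\tb$ to reduce \eqref{maintechinal} to $T\preceq I$, while the paper substitutes $\tilde\ca=\ca P(\tb^{1/2})$ to reduce to the case $\tb=\be$ (at which point $P(w)=P^{1/2}(\ca^\sfT\ca\be)$, so the two normalizations are the same trick). The divergence is in how the normalized inequality is proved. The paper establishes $\ca^\sfT\ca\preceq P^{1/2}\bigl(\sum_i{\rm tr}(\ba_i)\ba_i\bigr)$ term by term, using two lemmas: the superadditivity $P^{1/2}(\ba_1)+P^{1/2}(\ba_2)\preceq P^{1/2}(\ba_1+\ba_2)$, derived from Faybusovich's extension of Lieb's concavity theorem to Euclidean Jordan algebras, and the trace inequality $\la \ba,\bb\ra^2\le{\rm tr}(\ba)\la\bb,P(\ba^{1/2})\bb\ra$, derived from a Cauchy--Schwarz/von Neumann-type trace inequality. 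You instead observe that $T$ is symmetric positive semidefinite, maps $\cc$ into itself (by self-duality, since the $\hat\ba_i$ stay in $\itr(\cc)$ under the automorphism $P(w^{-1/2})$), and fixes the interior point $\hat\tb=P(w^{1/2})\tb$; Krein--Rutman then supplies a $\rho(T)$-eigenvector in $\cc$, whose strictly positive pairing with $\hat\tb$ rules out $\rho(T)>1$. All the individual steps check out: $P(w)\tb=\ca^\sfT\ca\tb$ follows from \eqref{cmean2}, $P(w^{\pm 1/2})$ are mutually inverse by \eqref{powers}, and the strict positivity $\la v,\hat\tb\ra>0$ for $v\in\cc\setminus\{0\}$, $\hat\tb\in\itr(\cc)$ is standard for proper self-dual cones. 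What your route buys is self-containedness: it bypasses the Lieb-concavity machinery entirely and, since self-duality of $\cc$ and self-adjointness of $P(\cdot)$ hold for any associative inner product, it does not need the paper's separate reduction from general EJAs to simple ones. What the paper's route buys is the two intermediate inequalities (superadditivity of $P^{1/2}$ and the trace bound), which are of independent interest and make the connection to the classical Lee--Seung majorization argument transparent. The only thing I would ask you to make explicit is that the Perron--Frobenius step is the finite-dimensional cone-preserving version (Vandergraft/Krein--Rutman for a proper cone), which applies because a self-dual full-dimensional cone is automatically closed, convex, pointed, and solid; and that all adjoints, orthogonality claims, and duality claims are taken with respect to one and the same associative inner product.
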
 


\begin{proof} 

We first focus on simple EJAs. We need to show the validity of the generalized inequality~\eqref{maintechinal} with respect to the Euclidean inner product. Nevertheless, as for simple EJAs, any inner product is a positive multiple of the canonical one, it suffices to prove  \eqref{maintechinal} for the canonical one. 

The proof  of \eqref{maintechinal} is broken down in two steps. 
First, we show that it suffices to consider the special case where $\bb_k=\be$.  More precisely, we first prove the inequality
\beq\label{dominatione}
\tilde{\ca}^\sfT \tilde{\ca} \preceq   P(e\#\tilde{\ca}^\sfT \tilde{\ca} e)
\eeq
for all $\ca$ such that $\tilde{a_i} \in \mathrm{int}(\cc)$.  Assuming that \eqref{dominatione} holds, we let $\tilde{\ca} = \ca P(b^{1/2})$, and let $\tilde{\bw} = (\tilde{\ca}^\sfT \tilde{\ca} \be)^{1/2}$.  Then
\begin{equation*}
\tilde{\bw} = (\tilde{\ca}^\sfT \tilde{\ca} \be)^{1/2} = (P(b^{1/2}) \ca^\sfT \ca P(b^{1/2}) \be)^{1/2} = (P(b^{1/2}) \ca^\sfT \ca b)^{1/2}.
\end{equation*}
Thus, by \eqref{cmean} we have 
\begin{equation*}
P(b^{-1/2}) \tilde{\bw} = P(b^{-1/2}) (P(b^{1/2}) \ca^\sfT \ca b)^{1/2} = b^{-1} \# \ca^\sfT \ca b.
\end{equation*}
By \eqref{dominatione}   we have $\tilde{\ca}^\sfT \tilde{\ca} \preceq P(\tilde{\ca}^\sfT \tilde{\ca} e)^{1/2} = P(\tilde{\bw})$.  
Consequently, we have 
\begin{equation*}
P(\bb^{-1}\#\ca^\sfT \ca\bb) =P(P(\bb^{-1/2})\tilde{\bw}) =  P(\bb^{-1/2}) P(\tilde{\bw})  P(\bb^{-1/2}) \succeq P(\bb^{-1/2}) \tilde{\ca}^{\sfT} \tilde{\ca} P(\bb^{-1/2}) = \ca^{\sfT}\ca,
\end{equation*}
where in the second equality we used  \eqref{unfolding}.
To conclude the proof, we prove  \eqref{dominatione} by showing the following two properties in   Lemma  \ref{thm:squareroot_ineq} and Lemma \ref{svdfs} respectively:
\begin{align}
\la b, P^{1\over 2}({{a_1}})b\ra+ \la b, P^{1\over 2}(a_2)b\ra & \le \la b, P^{1\over 2}({{(a_1+a_2}}))b\ra,\  \text{ for all }  \bb \in \R^d. \label{sumineq}\\
 \la a, b\ra^2 & \le   {\rm tr}(\ba)\la b, P^{1\over 2}({{a}})b\ra,\ \text{ for all }  \bb \in \R^d.\label{traceineq}
 \end{align}


Assuming the validity of \eqref{sumineq} and \eqref{traceineq},  it is now easy   to conclude    that~\eqref{dominatione} holds. By definition of the map $\ca$ and  \eqref{meanwithe}  we~have 
$$\be\#(\ca^\sfT \ca\be)=(\ca^\sfT \ca\be)^{1/2}=\left(\sum_{i=1}^{m}\la \ba_i , \be\ra\ba_i\right)^{{1\over 2}}=\left(\sum_{i=1}^{m}{\rm tr}(\ba_i \circ \be)\ba_i\right)^{{1\over 2}}=\left(\sum_{i=1}^{m}{\rm tr}(\ba_i)\ba_i\right)^{{1\over 2}}.$$
Consequently,   \eqref{dominatione} is equivalent to
\beq\label{termbyterm}
\sum_{i=1}^{m}\la \ba_i, b\ra^2\le  \left\la b, P^{1\over 2}\left({ \sum_{i=1}^{}{\rm tr}(\ba_i)\ba_i}\right) b\right\ra \eeq
Lastly, the proof of \eqref{termbyterm} follows from the following chain of inequalities:
\begin{eqnarray*}
\sum_{i=1}^{m}\la \ba_i, b\ra^2 & \le & \sum_{i=1}^{m}{\rm tr}(\ba_i)\la b, P({{a_i}}^{1/2})b\ra\\
& = & \sum_{i=1}^{m}\la b, P^{1/2}({{{{\rm tr}(\ba_i)\ba_i}}})b\ra \\
& \le & \left\la b, P^{1 \over 2}({ \sum_{i=1}^{}{\rm tr}(\ba_i)\ba_i}) b\right\ra,
 \end{eqnarray*}
where for the first inequality we use \eqref{traceineq}, for the second equality we use property \eqref{scalar} of the quadratic representation, and for the last inequality we use \eqref{sumineq}. 

Lastly, we consider the case where the EJA  is a direct sum of simple ones. In this case, the cone of squares $\cc$ is a direct of simple symmetric cones, i.e., $\cc=\cc_1\oplus  \ldots \oplus \cc_k$ and the Jordan product is given by 
$(x_1,\ldots, x_k)\circ (y_1,\ldots,y_k)=(x_1\circ y_1, \ldots, x_k\circ y_k)$ and ${\rm tr}(x_1,\ldots, x_k)=\sum_{i=1}^k{\rm tr}(x_i).$ Then, it is easy to check that the operators $\ca^\sfT \ca$ and $P(\ca^\sfT \ca e)^{1/2}$ are separable with respect to  the blocks corresponding to these individual simple EJAs.  Thus, if the operator inequality $\ca^\sfT \ca \preceq P(\ca^\sfT \ca e)^{1/2}$ holds for each individual block, it holds for the full-sized operators. 
\end{proof}
%
%
%
%

Next we proceed  with the proof of  \eqref{sumineq}.
\begin{lemma}\label{thm:squareroot_ineq}
For any  $\ba_1,\ba_2 \in \mathrm{int}(\cc)$ we have that
$$P^{1\over 2}({{a_1}}) + P^{1\over 2}(a_2) \preceq P^{1\over 2}({{a_1+a_2}}).$$
\end{lemma}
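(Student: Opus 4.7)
My plan is to view the desired operator inequality as a superadditivity statement for the operator-valued map $a\mapsto P(a^{1/2})$ on $\mathrm{int}(\cc)$, and then deduce superadditivity from a scalar concavity property combined with degree-one homogeneity. By property \eqref{powers}, $P^{1/2}(x) = P(x^{1/2})$ on $\mathrm{int}(\cc)$, so the inequality to prove reads $P(a_1^{1/2}) + P(a_2^{1/2}) \preceq P((a_1+a_2)^{1/2})$.

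For each fixed $b\in\cj$, I would introduce the scalar function $\phi_b(a) := \langle b, P(a^{1/2})b\rangle$ on $\mathrm{int}(\cc)$. Since every $P(x)$ is self-adjoint with respect to the trace inner product by \eqref{Pxsymmetric}, the operator inequality above is equivalent to the family of scalar inequalities $\phi_b(a_1) + \phi_b(a_2) \le \phi_b(a_1+a_2)$, ranging over all $b\in\cj$. Moreover, property \eqref{scalar} yields
$$\phi_b(\lambda a) = \langle b, P(\lambda^{1/2} a^{1/2})b\rangle = \lambda \langle b, P(a^{1/2})b\rangle = \lambda\,\phi_b(a)$$
for any $\lambda>0$, so each $\phi_b$ is positively homogeneous of degree one.

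For a degree-one homogeneous function, superadditivity is equivalent to midpoint concavity: if $\phi_b$ is concave then
$$\phi_b(a_1) + \phi_b(a_2) = 2\cdot\tfrac{1}{2}\bigl(\phi_b(a_1)+\phi_b(a_2)\bigr) \le 2\,\phi_b\!\left(\tfrac{a_1+a_2}{2}\right) = \phi_b(a_1+a_2),$$
which is exactly the scalar inequality I need. So the entire task reduces to showing that $\phi_b$ is a concave scalar function on $\mathrm{int}(\cc)$ for every $b\in\cj$.

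Establishing this concavity is the crux of the argument and is the main obstacle. In the classical PSD case, the function simplifies to $\phi_B(A) = \mathrm{tr}(BA^{1/2}BA^{1/2})$, whose concavity in $A\succ 0$ is the content of Lieb's concavity theorem. For a general symmetric cone, the required concavity should come from the EJA/symmetric-cone extension of Lieb's concavity theorem that the abstract highlights; this is the genuinely new analytic input. Once concavity of $\phi_b$ is in hand, the lemma follows at once from the homogeneity argument above, and (invoking the direct-sum reduction already used in the proof of Theorem \ref{auxilliary}) it suffices to establish concavity only for simple EJAs, which cuts the work down to a finite list of cases (real/complex/quaternionic Hermitian matrices, the second-order cone, and $3{\times}3$ Hermitian octonionic matrices).
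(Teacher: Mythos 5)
Your proposal is correct and follows essentially the same route as the paper: the paper likewise reduces the operator inequality to the scalar inequalities $\langle b, P(a_1^{1/2})b\rangle + \langle b, P(a_2^{1/2})b\rangle \le \langle b, P((a_1+a_2)^{1/2})b\rangle$ and derives them from the EJA extension of Lieb's concavity theorem (\cite[Theorem 3.1]{Fay}, via the identity ${\rm tr}(P(k)a^{1/2}\circ b^{1/2})=\langle k, P(a^{1/2},b^{1/2})k\rangle$), combined with the same homogeneity trick, implemented there by substituting $a_1=b_1=a/\lambda$ and $a_2=b_2=b/(1-\lambda)$ into the joint-concavity inequality. The one step you leave unproved, concavity of $a\mapsto\langle b,P(a^{1/2})b\rangle$, is precisely the restriction of that jointly concave two-variable function to the diagonal $a=b$ (using $P(x,x)=P(x)$), so your identification of the required analytic input is accurate.
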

\begin{proof}We will show that
\begin{equation*}
\la b, P^{1\over 2}({{a_1}})b\ra+ \la b, P^{1\over 2}(a_2)b\ra  \le \la b, P^{1\over 2}({{a_1+a_2}})b\ra,\  \text{ for all }  \bb \in \R^d;
\end{equation*}
 For any EJA   $(\cj,\circ)$ the function
$$f: \cq\times \cq \to \R,  \quad (a,b) \mapsto {\rm tr}(P(k)a^p\circ b^{1-p}),$$
is jointly concave for any fixed $k\in \cj$ and $0\le p\le 1$   \cite[Theorem 3.1]{Fay}. This the extension of Lieb's Concavity Theorem in the more general setting of  EJAs. Also, by  \cite[Lemma 3.1]{Fay} we have
$${\rm tr}(P(k)a^p\circ b^{1-p})=\la k, P({a^p,b^{1-p}})k\ra,$$
 and thus, it follows that for any fixed $k\in \cj,$ the mapping 
$$(a,b) \mapsto\la k, P({{a}^{1/2}, {b}}^{1/2})k\ra,$$
is concave; i.e., for any $\lam\in [0,1]$ and $(a_1,b_1),(a_2,b_2)\in \cq\times \cq $ we have 
\begin{eqnarray*}
& & \la k, P(({{\lam a_1+(1-\lam)a_2})^{1/2}, ({\lam b_1+(1-\lam)b_2}})^{1/2})k\ra \\
& \ge & \lam  \la k, P({a_1}^{1/2}, {b_1}^{1/2})k\ra +(1-\lam) \la k,  P({a_2}^{1/2}, {b_2}^{1/2})k\ra.
\end{eqnarray*}
 Setting $a_1=b_1={a/\lam}$ and $a_2=b_2={b/(1-\lam})$, and using that $P({x,x})=P(x)$
 we get 
  $$ \la k, P^{1\over 2}({{ a+b}})k\ra \ge  \la k,   P^{1\over 2}({{a}})k\ra +\la k,   P^{1\over 2}({{b}})k\ra,$$
  which is exactly \eqref{sumineq}. 
\end{proof}


%
%

Next we proceed  with the proof of \eqref{traceineq}.

\begin{lemma}\label{svdfs}
For any $a\in {\rm int}(\cc)$ we have that 
$$ \la a, b\ra^2  \le   {\rm tr}(\ba)\la b, P({{a^{1/2}}})b\ra,\ \text{ for all }  \bb \in \R^d.$$

\end{lemma}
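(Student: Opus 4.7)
The plan is to apply the Cauchy--Schwarz inequality in the canonical trace inner product after re-expressing $\la a,b\ra$ suitably in terms of $P(a^{1/4})$. Since $a\in{\rm int}(\cc)$, all eigenvalues of $a$ are strictly positive and so $a^{1/4}$ is well-defined via its spectral decomposition. The key identity I would establish is $P(a^{1/4})\,a^{1/2}=a$. This follows by combining $P(x)\,e=x^2$ (the defining property of the quadratic representation) with $P(x)^2=P(x^2)$ (property~\eqref{powers}):
$$P(a^{1/4})\,a^{1/2}=P(a^{1/4})\bigl(P(a^{1/4})\,e\bigr)=P(a^{1/4})^2\,e=P(a^{1/2})\,e=a.$$

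Combined with the self-adjointness of $P(a^{1/4})$ with respect to the trace inner product (property~\eqref{Pxsymmetric}), this yields the rewriting
$$\la a,b\ra=\la P(a^{1/4})\,a^{1/2},b\ra=\la a^{1/2},P(a^{1/4})\,b\ra.$$
Applying Cauchy--Schwarz to the right-hand side then gives
$$\la a,b\ra^2\le\la a^{1/2},a^{1/2}\ra\cdot\la P(a^{1/4})\,b,P(a^{1/4})\,b\ra.$$
The first factor simplifies via $\la a^{1/2},a^{1/2}\ra={\rm tr}(a^{1/2}\circ a^{1/2})={\rm tr}(a)$. For the second, using self-adjointness \eqref{Pxsymmetric} once more together with $P(a^{1/4})^2=P(a^{1/2})$ gives
$$\la P(a^{1/4})\,b,P(a^{1/4})\,b\ra=\la b,P(a^{1/4})^2 b\ra=\la b,P(a^{1/2})\,b\ra.$$
Combining these three relations delivers the desired inequality.

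The proof is essentially mechanical once the factorization $a=P(a^{1/4})\,a^{1/2}$ is identified, so there is no genuine obstacle. The conceptual content is that $P(a^{1/4})$ plays the role of the mapping $b\mapsto a^{1/4}ba^{1/4}$ from the PSD case, and its square coincides precisely with $P(a^{1/2})$, which makes the weighting in the Cauchy--Schwarz bound come out exactly right.
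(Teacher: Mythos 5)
Your proof is correct and follows essentially the same route as the paper: both rewrite $\la a,b\ra=\la a^{1/2},P(a^{1/4})b\ra$ using the self-adjointness of $P(a^{1/4})$ and then apply the Cauchy--Schwarz inequality for the canonical trace inner product (the paper cites this as ${\rm tr}(a\circ b)^2\le{\rm tr}(a^2){\rm tr}(b^2)$, which is the same thing), finishing with $P(a^{1/4})^2=P(a^{1/2})$. The only difference is that you explicitly verify the identity $P(a^{1/4})a^{1/2}=a$, which the paper uses without comment.
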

\begin{proof}

 For this, note that:
  $$\langle \ba,\bb \rangle = \mathrm{tr}(\ba \circ \bb) = \mathrm{tr}(P({\ba^{1/4}}) \ba^{1/2} \circ \bb) = \mathrm{tr}(\ba^{1/2} \circ P({\ba^{1/4}}) \bb),$$
  where   for the last equality we used \eqref{Pxsymmetric}. Using that ${\rm tr}(a\circ b)^2\le {\rm tr}(a^2){\rm tr}(b^2)$ (see \cite{gowda}),  we~get
\beq\label{csvsdbvs}
\langle \ba,\bb \rangle^2={\rm tr}(a\circ b)^2\le {\rm tr}(a){\rm tr}(P({a^{1/4}})b \circ P({a^{1/4}})b).
\eeq
Lastly, we have that 
\beq\label{sdvsbv}
{\rm tr}(P({a^{1/4}})b \circ P({a^{1/4}})b)=\la P({a^{1/4}})b, P({a^{1/4}})b\ra=\la  b,P^2({a^{1/4}})b\ra=\la  b,P({{a}}^{1/2})b\ra,
\eeq
where  for the last equality we use \eqref{powers}. 
Combining \eqref{csvsdbvs} with~\eqref{sdvsbv} we get $\langle \ba,\bb \rangle^2\le{\rm tr}(a) \la  b,P({{a}}^{1/2})b\ra.$ 
\end{proof}

\if0
\begin{lemma}\label{thm:domination}
Let  $\tb \in {\rm int}(\cc)$ and   define  $\ca:\R^d\to \R^{\mm}$ to be the linear mapping 
$$ \bx \mapsto  \ca \bx=( \la \ba_1,\bx \rangle, \ldots ,\la \ba_{\mm},\bx\ra),$$
for fixed vectors $a_1,\ldots, a_{\mm}\in \cc$.
Then, we have that
\beq\label{maintechinal}
\la \bb, \ca^\sfT \ca(b)\ra\le \la \bb, P(\tb^{-1}\#\ca^\sfT \ca\tb)\bb\ra,  \text{ for all }  \bb \in \R^d.
\eeq
\end{lemma}

\begin{proof}(of Lemma \ref{thm:domination}) The proof  is broken down  in two   steps. 
\bi 
\item {\bf Step 1:} We show that  \eqref{maintechinal} can be reduced to the special case where $\tb=\be$, i.e., 
\beq\label{dominatione}
\la \bb, \ca^\sfT \ca(b)\ra\le \la \bb, P(e^{-1}\#\ca^\sfT \ca e)\bb\ra,  \ \text{ for all }  \bb \in \R^d.
\eeq
\item {\bf Step 2:} We prove  \eqref{dominatione} by showing the following two properties:
\begin{align}
\la b, P({\sqrt{a_1}})b\ra+ \la b, P({\sqrt{a_2}})b\ra & \le \la b, P({\sqrt{a_1+a_2}})b\ra,\  \text{ for all }  \bb \in \R^d. \label{sumineq}\\
 \la a, b\ra^2 & \le   {\rm tr}(\ba)\la b, P({\sqrt{a}})b\ra,\ \text{ for all }  \bb \in \R^d.\label{traceineq}
 \end{align}

\ei 
Assuming the validity of Steps 1 and 2,  it is now easy   to conclude the proof of Lemma \ref{thm:domination}. Indeed, by Step 1, we only need  to show   that \eqref{dominatione} holds. By definition of the map $\ca$ and  \eqref{meanwithe}  we~have 
$$\be\#(\ca^\sfT \ca\be)=(\ca^\sfT \ca\be)^{1/2}=\left(\sum_{i=1}^{\mm}\la \ba_i , \be\ra\ba_i\right)^{{1\over 2}}=\left(\sum_{i=1}^{\mm}{\rm tr}(\ba_i \circ \be)\ba_i\right)^{{1\over 2}}=\left(\sum_{i=1}^{\mm}{\rm tr}(\ba_i)\ba_i\right)^{{1\over 2}}.$$
Consequently,   \eqref{dominatione} is equivalent to
\beq\label{termbyterm}
\sum_{i=1}^{\mm}\la \ba_i, b\ra^2\le  \left\la b, P^{1\over 2}\left({ \sum_{i=1}^{}{\rm tr}(\ba_i)\ba_i}\right) b\right\ra \eeq
Lastly, the proof of \eqref{termbyterm} follows from the following chain of inequalities:
$$
\begin{aligned}
&\sum_{i=1}^{\mm}\la \ba_i, b\ra^2\le \sum_{i=1}^{\mm}{\rm tr}(\ba_i)\la b, P({\sqrt{a_i}})b\ra=\\
 & \sum_{i=1}^{\mm}\la b, P({\sqrt{{{\rm tr}(\ba_i)\ba_i}}})b\ra\le \\
 &   \left\la b, P^{1 \over 2}({ \sum_{i=1}^{}{\rm tr}(\ba_i)\ba_i}) b\right\ra,
 \end{aligned}
 $$
where for the first inequality we use \eqref{traceineq}, for the second equality we use property \eqref{scalar} of the quadratic representation, and for the last inequality we use \eqref{sumineq}. 
\end{proof}

To conclude the proof of Lemma \ref{thm:domination}  it remains  to show the validity of Step 1 and Step 2.
\paragraph{Proof of Step 1.}
First, we show that it suffices to reduce to the instance $\tb = \be$, i.e., assuming that \eqref{dominatione} holds  we prove \eqref{maintechinal}. For this  define
$$\tilde{\ca} = \ca P({\sqrt{\tb}}),$$ and let $\tilde{\bw}$ be the scaling point that $P({\tilde{\bw}}) \be = \tilde{\ca}^{T} \tilde{\ca} \be$. By assumption we have that $P_{\tilde{\bw}} \succeq \tilde{\ca}^{\sfT} \tilde{\ca}$. 
Then 
 $$
 \begin{aligned}
& \tb^{-1}\#\ca^\sfT \ca\tb= P({\tb^{-1/2}}) (P({\tb^{1/2}}) \ca^{\sfT} \ca \bb )^{1/2} = P({\tb^{-1/2}}) ((P{\tb^{1/2}}) \ca^{T} \ca P({\tb^{1/2}})  \be )^{1/2} =\\
& =P({\tb^{-1/2}}) ( \tilde{\ca}^{T} \tilde{\ca}  \be )^{1/2} =P({\tb^{-1/2}}) ( P_{\tilde{\bw}} \be)^{1/2} = P({\bb^{-1/2}})\tilde{\bw}.
 \end{aligned}
 $$
 Subsequently, we have 
 $$P(\tb^{-1}\#\ca^\sfT \ca\tb) =P_{P_{\tb^{-1/2}}\tilde{\bw}} = P_{\tb^{-1/2}} P_{\tilde{\bw}} P_{\tb^{-1/2}} \succeq P_{\tb^{-1/2}} \tilde{\ca}^{\sfT} \tilde{\ca} P_{\bb^{-1/2}} = \ca^{\sfT}\ca.$$

\paragraph{Proof of Step 2.}We begin with the proof of \eqref{sumineq}. For any EJA   $(\cj,\circ)$ the function
$$f: \cq\times \cq \to \R,  \quad (a,b) \mapsto {\rm tr}(P(k)a^p\circ b^{1-p}),$$
is concave for any fixed $k\in \cj$ and $0\le p\le 1$   \cite[Theorem 3.1]{Fay}. Also, by  \cite[Lemma 3.1]{Fay} we have
$${\rm tr}(P(k)a^p\circ b^{1-p})=\la k, P({a^p,b^{1-p}})k\ra,$$
 and thus, it follows that for any fixed $k\in \cj,$ the mapping 
$$(a,b) \mapsto\la k, P({\sqrt{a}, \sqrt{b}})k\ra,$$
is concave, i.e., for any $\lam\in [0,1]$ and $(a_1,b_1),(a_2,b_2)\in \cq\times \cq $ we have 
 $$\la k, P({\sqrt{\lam a_1+(1-\lam)a_2}, \sqrt{\lam b_1+(1-\lam)b_2}})k\ra \ge  \lam  \la k, P({\sqrt{a_1}, \sqrt{b_1}})k\ra +(1-\lam) \la k,  P({\sqrt{a_2}, \sqrt{b_2}})k\ra.$$ 
 Setting $a_1=b_1={a/\lam}$ and $a_2=b_2={b/(1-\lam})$, and using that $P({x,x})=P(x)$
 we get 
  $$ \la k, P({\sqrt{ a+b}})k\ra,\ge  \la k,   P({\sqrt{a}})k\ra +\la k,   P({\sqrt{b}})k\ra,$$
  which is exactly \eqref{sumineq}. 
  
  Next we show the validity of \eqref{traceineq}. For this, note that:
  $$\langle \ba,\bb \rangle = \mathrm{tr}(\ba \circ \bb) = \mathrm{tr}(P({\ba^{1/4}}) \ba^{1/2} \circ \bb) = \mathrm{tr}(\ba^{1/2} \circ P({\ba^{1/4}}) \bb),$$
  where   for the last equality we used \eqref{Pxsymmetric}. Using that ${\rm tr}(a\circ b)^2\le {\rm tr}(a^2){\rm tr}(b^2)$ (see \cite{gowda}),  we~get
\beq\label{csvsdbvs}
\langle \ba,\bb \rangle^2={\rm tr}(a\circ b)^2\le {\rm tr}(a){\rm tr}(P({a^{1/4}})b \circ P({a^{1/4}})b).
\eeq
Lastly, we have that 
\beq\label{sdvsbv}
{\rm tr}(P({a^{1/4}})b \circ P({a^{1/4}})b)=\la P({a^{1/4}})b, P({a^{1/4}})b\ra=\la  b,P^2({a^{1/4}})b\ra=\la  b,P({\sqrt{a}})b\ra,
\eeq
where  for the last equality we use \eqref{powers}. 
Combining \eqref{csvsdbvs} with~\eqref{sdvsbv} we get $\langle \ba,\bb \rangle^2\le{\rm tr}(a) \la  b,P({\sqrt{a}})b\ra.$ 
This concludes the proof of Step 2, and consequently, Lemma \ref{thm:domination} has been established. 
\fi

In our last result in this section  we show that fixed points of our update scheme correspond to first-order stationary points of the optimization problem \eqref{opt}.  

\begin{theorem}\label{thm:kkt} 
Let  $\{a_i\}_{i\in [\mm]}, \{b_i\}_{j\in [\nn]} \in {\rm int}(\cc)$  be fixed points of the  multiplicative update rule:
$$
\begin{aligned}
& a_i \leftarrow P(a_i\#(\mathcal{B}^\sfT \mathcal{B}a_i)^{-1})\mathcal{B}^\sfT X_{i :}, \text{ for all } 1\le i \le \mm \\
& \bb_j \leftarrow P(\bb_j\#(\ca^\sfT \ca\bb_j)^{-1})\ca^\sfT X_{:j}, \text{ for all } 1\le j\le \nn.
\end{aligned}
$$
Then $\{a_i\}_{i\in [\mm]}, \{b_i\}_{j\in [\nn]}$ satisfy 
\beq\label{KKT}
\mathcal{B}^\sfT (X_{: i})=[\mathcal{B}^\sfT\mathcal{B}] (a_i), \ i\in [\mm] \quad \text{ and } \quad
 \mathcal{A}^\sfT (X_{: j}) =[\mathcal{A}^\sfT \mathcal{A}] (b_j), \ j \in [\nn].
\eeq

\end{theorem}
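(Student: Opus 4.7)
The plan is to exploit the scaling-point characterization of the geometric mean together with the invertibility of $P(w)$ for $w \in \mathrm{int}(\cc)$. Suppose $b_j$ is a fixed point of the update, i.e.
$$b_j \;=\; P\bigl(b_j \# (\ca^\sfT \ca b_j)^{-1}\bigr)\,\ca^\sfT X_{:j}.$$
Write $w_j := b_j \# (\ca^\sfT \ca b_j)^{-1}$. First I would check that $w_j$ is a well-defined element of $\mathrm{int}(\cc)$: this follows because $b_j \in \mathrm{int}(\cc)$ by hypothesis and $\ca^\sfT \ca b_j = \sum_i \la a_i, b_j\ra a_i$ is a strictly positive combination of elements of $\mathrm{int}(\cc)$ (the inner products are positive by self-duality), so its inverse lies in $\mathrm{int}(\cc)$ as well, and the geometric mean of two interior elements is interior.

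Next, I would invoke the symmetry of the geometric mean from \eqref{meanwithe} to rewrite $w_j = (\ca^\sfT \ca b_j)^{-1} \# b_j$, and then apply the defining scaling identity \eqref{cmean2} (namely $P(x\#y)\,x^{-1} = y$) with $x = (\ca^\sfT \ca b_j)^{-1}$ and $y = b_j$. This yields
$$P(w_j)\,(\ca^\sfT \ca b_j) \;=\; b_j.$$
Combining the two displays gives $P(w_j)\,\ca^\sfT X_{:j} = P(w_j)\,(\ca^\sfT \ca b_j)$. Since $w_j \in \mathrm{int}(\cc)$, property \eqref{pdness} asserts that $P(w_j)$ is positive definite, hence invertible, and canceling it yields the first batch of equations $\ca^\sfT X_{:j} = \ca^\sfT \ca\, b_j$ for every $j \in [n]$.

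The second batch, $\mathcal{B}^\sfT X_{i:} = \mathcal{B}^\sfT \mathcal{B}\, a_i$ for every $i \in [m]$, is obtained by the same argument applied to the fixed-point equation for $a_i$, with the roles of $\ca$ and $\mathcal{B}$ interchanged. No substantive obstacle arises: the entire proof reduces to recognizing the update as $P(w_j)\ca^\sfT X_{:j}$ with $w_j$ a scaling point that sends $\ca^\sfT \ca b_j$ to $b_j$, and to using that $P(w_j)$ is invertible on the interior of the cone.
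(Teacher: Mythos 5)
Your proposal is correct and follows essentially the same route as the paper: both arguments hinge on the scaling-point identity \eqref{cmean2} together with the invertibility of $P(w_j)$ for $w_j\in{\rm int}(\cc)$, the only cosmetic difference being that you use commutativity of $\#$ and cancel $P(w_j)$, whereas the paper applies $P(w_j^{-1})$ to the fixed-point equation via $(x\#y)^{-1}=x^{-1}\#y^{-1}$. No gaps.
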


\begin{proof}We only focus on the $b_j$'s as the argument for the $a_i$'s is similar.  Assume that 
$$ \bb_j = P(w)\ca^\sfT X_{:j},\  \text{ where } w=\bb_j\#(\ca^\sfT \ca\bb_j)^{-1} \text{ for all } 1\le j\le \nn.$$
Since $a_i,\bb_j\in {\rm int}(\cc)$, we have $\la a_i,b_j\ra>0$ and hence $\ca^\sfT Ab_j=\sum_i\la a_i,b_j\ra a_i \in {\rm int} (\cc)$. 
Thus $w$ is invertible~and 
\beq\label{cxsdvdf}
P(w^{-1})\bb_j=\ca^\sfT X_{:j}.
\eeq
By noting $ w=\bb_j\#(\ca^\sfT \ca\bb_j)^{-1}$ and \eqref{meanwithe}, we have 
$$w^{-1}=(\bb_j)^{-1}\#(\ca^\sfT \ca\bb_j).$$
It follows from \eqref{cmean2} that
\beq\label{sdcdfv}
P(w^{-1})b_j=\ca^\sfT \ca\bb_j.
\eeq
By combining \eqref{cxsdvdf} and \eqref{sdcdfv}, we have $\ca^\sfT \ca\bb_j=\ca^\sfT X_{:j}$ for all  $ j \in [\nn]$.
\end{proof}

\section{Numerical Experiments}

In this section we proceed from theory to practise and use the SCMU algorithm for computing SOCP-lifts of regular $n$-gons. To the best of our knowledge there are no algorithms developed specifically  for computing SOCP-lifts. In terms of negative results,  Fawzi established in \cite{hamza} that the   $3\times 3$  positive semidefinite cone does not admit any second-order cone representation.

In the case of  the second-order cone, there are two different types  of lifts that can be considered. 
The first possibility is  to  consider lifts over $\soc_n=\{(x,t)\in \mathbb{R}^{n}\times \R: \|x\|_2\le t\} $, whereas the second possibility is to consider some fixed~$n$, say $n=2$, and consider lifts over Cartesian products of~$\soc_2$. Nevertheless, there is a close relationship between these two types of SOC-lifts. Specifically, in view of the ``tower of variables'' construction given  in \cite{BT}, a lift over $\soc_k$ (where $k=2^\theta$) can be transformed into a lift over the symmetric cone 
$$\underbrace{\soc_2\times \ldots \times \soc_2}_{k-1 \text{ times}},$$
by adding $k-2$ additional variables. As an example of this,  $(x_1,x_2,x_3,x_4,t)\in \soc_4$ iff
$$ \exists y_1,y_2 \text{ where } (x_1,x_2,y_1)\in \soc_2, (x_3,x_4,y_2)\in \soc_2, (y_1,y_2,t)\in \soc_2.$$
%
%
%

\subsection{Implementation details}

\paragraph{Damped updates.}  Recall that the update rule of the SCMU algorithm  is:
$$\bb \leftarrow  P(\bb \#(\ca^\sfT \ca\bb )^{-1})\ca^\sfT \bx.$$
In performing the update, it is necessary to compute square-roots and inverses of certain elements.  The conditioning of these steps depend on how close the eigenvalues of these elements are to zero.  As such, it is advisable to apply a small amount of damping when performing these steps.  We summarize these steps in Algorithm \ref{algo-damped} -- here, $e$ is the EJA identity element, while $J$ is the identity map.
\begin{algorithm*}[h]
\textbf{Input:} EJA element $a$, linear map $\mathcal{B}$, damping parameter $\epsilon$ 
\begin{algorithmic}
\STATE 1. $z \leftarrow (\mathcal{B}^\sfT \mathcal{B} a + \epsilon e)^{1/2} $
\STATE 2. $T \leftarrow P(z) + \epsilon I $
\STATE 3. $h \leftarrow T^{1/2}  (T^{-1/2} a+\epsilon e)^{1/2}$
\STATE 4. $b \leftarrow P(h) \mathcal{A}^\sfT x$
\end{algorithmic}
\label{algo-damped}\caption{Damped multiplicative updates for cone factorizations}
\end{algorithm*}
In our numerical experiments, we apply $\epsilon = 10^{-6}$ as our choice of damping.

\paragraph{Strategic initializations.}  We apply the following two stage initialization strategy.  In the first stage, we apply our algorithm for $100$ iterations over $100$ different random initializations.  We keep track of the final iterate and the residual error corresponding to all initializations.  We then eliminate all but $10$ iterates with smallest $10$ residual errors.  In the second stage, we apply our algorithm for an additional $900$ iterations starting from the final iterate of these $10$ iterates.  We report the smallest residual error obtained in the second stage.

\subsection{SOCP-lifts for regular $n$-gons}
Denoting by   $\times^l \soc_k$  the Cartesian product of $l$ copies of  where $\soc_k=\{(x,t)\in \mathbb{R}^{k}\times \R: \|x\|_2\le t\} $, in our experiments we compute $\times^l \soc_k$ 
 factorizations of the slack matrices of  regular $4$-gons, $5$-gons, $6$-gons, and $8$-gons for various values of $l$ and $k$. Subsequently, based on our   numerical results, we formulate several conjectures on the (non)existence of certain types of SOCP-lifts.

 Our guiding principle is a simple heuristic that was explicitly formulated and used in  \cite{VGG18} in the setting  of PSD factorizations. Specifically,   for a fixed value of  $k$, we would like to find the least~$l$ for which our matrix has a $ \times^l \soc_k$ factorization. Denoting by $l^*$ the least possible value such a lift exists, we expect to see a noticeable  ``phase transition''  with respect  to the error of the SCMU algorithm, namely it should be positive for $l<l^*$ and zero for $l\ge l^*$.

  We note that for a fixed value of $k$, there always exists a $\times^l \soc_k$-lift for a large enough value of~$l$. Indeed, for any non-negative matrix $X\in \R^{m\times n}_+$ we always have a non-negative factorization with vectors  of dimension   $\min \{n,m\}$. Considering any NMF with an even dimension $d$, we can pair the non-negative coordinates in pairs of two to get a factorization $\underbrace{\R^2_+\times \ldots \times \R^2_+}_{d/2\text{ times}}$. Lastly, as $\R^2_+$ can be rotated to $\soc_1$, this construction leads to a $\times^{d/2} \soc_k$ factorization.

In practise, for fixed $k$ and increasing $l$, we notice that the errors steadily decrease up to a point where the error stagnates.  
Based on this, we conjecture that the first instance where the error stabilizes corresponds to smallest $l$ for which there is an exact $\times^l \soc_k$-lift.

\if0
This is the slack matrix
\begin{equation*}
\sqrt{2} \left( \begin{array}{cccc}
0 & 1 & 1 & 0 \\ 
0 & 0 & 1 & 1 \\
1 & 0 & 0 & 1 \\
1 & 1 & 0 & 0 \\
\end{array} \right)
\end{equation*}
\fi
In the first instance, we compute  factorizations of the slack matrix of the regular $4$-gon.  In Figure \ref{fig:numexp4gon}, we show the final residual errors obtained using our method. 
\begin{figure}[h]
\centering
\begin{tabular}{| c || c c c |}
\hline
 & $\times^1$ & $\times^2$ & $\times^3$ \\
\hline
\hline
$\soc_1$ & 0.50 & 0.0019 & 0.0025 \\
$\soc_2$ & 0.17 & 0.0020 & 0.0025\\
$\soc_3$ & 0.17 & 0.0021 & 0.0027 \\
$\soc_4$ & 0.17 & 0.0021 & 0.0027 \\
\hline
\end{tabular}
\caption{Best error for regular $4$-gon.}
\label{fig:numexp4gon}
\end{figure}
For $l=1$ and increasing $k$, we noticed that the errors decrease from $k=1$ to $k=2$, but stagnate right after.  This suggests  that the $4$-gon does not admit a $\soc_k$-lift for any $k$.
  In the case of the $4$-gon, we have an explicit factorization of the slack matrix using $\soc_1 \times \soc_1$:
\begin{equation*}
\sqrt{2} \left( \begin{array}{rrrr}
0 & 1 & 1 & 0 \\ 
0 & 0 & 1 & 1 \\
1 & 0 & 0 & 1 \\
1 & 1 & 0 & 0 \\
\end{array} \right)
= \frac{\sqrt{2}}{12} \left( \begin{array}{rrrr}
3 & -3 & & \\  
3 & 3 & & \\
 & & 2 & 2 \\
 & & 2 & -2 \\
\end{array} \right)
\left( \begin{array}{rrrr}
 & 2 & 4 & 2 \\
 & -2 & & 2 \\
6 & 3 & & 3 \\ 
 & -3 & & 3 \\
\end{array} \right).
\end{equation*}


\paragraph{$5$-gon.} In  our second example, we compute a factorization of the slack matrix of the regular $5$-gon.  In Figure \ref{fig:numexp5gon}, we show the final residual errors obtained using the SCMU  method and  based on these, conjecture that the regular $5$-gon admits a $\times^l \soc_k$-lift if and only if $l \geq 3$.

\begin{figure}[h]
\centering
\begin{tabular}{| c || c c c c |}
\hline
 & $\times^1$ & $\times^2$ & $\times^3$ & $\times^4$ \\
\hline
\hline
$\soc_1$ & 0.47 & 0.12 & 0.0024 & 0.0026\\
$\soc_2$ & 0.10 & 0.018 & 0.0026 & 0.0027 \\
$\soc_3$ & 0.10 & 0.018 & 0.0034 & 0.0033 \\
$\soc_4$ & 0.10 & 0.018 & 0.0040 & 0.0035 \\
\hline
\end{tabular}
\caption{Best error for regular $5$-gon}
\label{fig:numexp5gon}
\end{figure}

\paragraph{$6$-gon.}  In the third instance, we compute factorization of the slack matrix of the regular $6$-gon.  In Figure \ref{fig:numexp6gon}, we show the final residual errors obtained using the SCMU  method  and conjecture that the regular $5$-gon admits a $\times^l \soc_k$-lift if and only if $l \geq 3$.

\if0
This is the slack matrix
\begin{equation*}
\frac{\sqrt{3}}{2} \left( \begin{array}{rrrrrr}
0&1&2&2&1&0\\ 
0&0&1&2&2&1\\ 
1&0&0&1&2&2\\ 
2&1&0&0&1&2\\ 
2&2&1&0&0&1\\ 
1&2&2&1&0&0\\ 
\end{array} \right)
\end{equation*}

\begin{equation*}
\left( \begin{array}{rrrrrr}
 & & 3& -1&1&-1\\ 
1&1& 3&* & & \\
2& & & &*&*\\
2&-1& & &*&*\\
1&-1& & &*&* \\
 & &1.5&1.5&*&*  
\end{array} \right)
\left( \begin{array}{rrrrrr}
1/3& & & &1/2&2/3\\ 
-1/3& & & &1/2& \\ 
 & &1/3&2/3&1/6& \\ 
 & &1/3& &-1/6& \\ 
1/2&1&1/2& & & \\ 
1/2&1/3&-1/2& & & \\ 
\end{array} \right)
\end{equation*}
\fi

\begin{figure}[h]
\centering
\begin{tabular}{| c || c c c c |}
\hline
 & $\times^1$ & $\times^2$ & $\times^3$ & $\times^4$ \\
\hline
\hline
$\soc_1$ & 0.45 & 0.095 & 0.0023 & 0.0027 \\
$\soc_2$ & 0.069 &  0.021 & 0.0034 & 0.0033 \\
$\soc_3$ & 0.070 & 0.023 & 0.0036 & 0.0036 \\
$\soc_4$ & 0.071 & 0.022 & 0.0044 & 0.0033\\
\hline
\end{tabular}
\caption{Best error for regular $6$-gon}
\label{fig:numexp6gon}
\end{figure}


\paragraph{$8$-gon.}  In our fourth example, we  factorize the slack matrix of the regular $8$-gon.  In Figure \ref{fig:numexp8gon}, we show the final residual errors obtained using our method.  We observe similar trends as we did with the previous instances and conjecture that the regular $8$-gon admits a $\times^l \soc_k$-lift if and only if $l \geq 4$.

\begin{figure}[h]
\centering
\begin{tabular}{| c || c c c c |}
\hline
 & 1 copy & 2 copies & 3 copies & 4 copies \\
\hline
\hline
$\soc_1$ & 0.44 & 0.073 & 0.029 & 0.0040 \\
$\soc_2$ & 0.038 &   0.028 & 0.010 & 0.0059 \\
$\soc_3$ & 0.040 & 0.027 & 0.0096 & 0.0068 \\
$\soc_4$ & 0.043 & 0.025 & 0.0093 & 0.0060 \\
\hline
\end{tabular}
\caption{Best error for regular $8$-gon}
\label{fig:numexp8gon}
\end{figure}
\paragraph{Acknowledgments.}  YS gratefully acknowledges Ministry of Education (Singapore) Academic Research Fund (Tier 1) R-146-000-329-133.  AV gratefully acknowledges Ministry of Education (Singapore) Start-Up Research Grant SRG ESD 2020 154 and NRF2019-NRF-ANR095 ALIAS grant. Both authors acknowledge useful discussions with  Professor Kim-Chuan Toh.

\bibliography{conic_MM,PSD_MM}

\end{document}